\newtheorem{theorem}{Theorem}[section]
\newtheorem{lemma}[theorem]{Lemma}
\newtheorem{proposition}[theorem]{Proposition}
\theoremstyle{definition}
\newtheorem{definition}{Definition}[section]
\renewcommand{\leq}{\leqslant}
\renewcommand{\geq}{\geqslant}
\def\R{\mathbb{R}}
\def\C{\mathbb{C}}
\def\Z{\mathbb{Z}}
\def\E{\mathbb{E}}
\def\P{\mathbb{P}}
\def\N{\mathbb{N}}
\def\eps{\varepsilon}
\newcommand{\md}[1]{\ensuremath{(\operatorname{mod}\, #1)}}
\newcommand{\mdsub}[1]{\ensuremath{(\mbox{\scriptsize mod}\, #1)}}
\numberwithin{equation}{section}
\begin{document}

\title[Correlation sequences not approximable by nilsequences]{Multiple correlation sequences not approximable by nilsequences}


\author[Bri\"et]{Jop Bri\"et}
\address{Centrum Wiskunde \& Informatica (CWI) \\ Science Park 123 \\ 1098 XG Amsterdam \\The Netherlands}
\email{j.briet@cwi.nl}

\author[Green]{Ben Green}
\address{Mathematical Institute \\ Andrew Wiles Building \\ Radcliffe Observatory Quarter \\ Woodstock Rd \\ Oxford OX2 6QW}
\email{ben.green@maths.ox.ac.uk}

\thanks{The first author is supported by the Gravitation grant NETWORKS-024.002.003 from the Dutch Research Council (NWO).
The second author is supported by a Simons Investigator Award and is grateful to the Simons Foundation for their support.}

\subjclass[2000]{Primary: 11B30; Secondary: 37A45.}

\begin{abstract}
We show that there is a measure-preserving system $(X,\mathscr{B}, \mu, T)$ together with functions $F_0, F_1, F_2 \in L^{\infty}(\mu)$ such that the correlation sequence $C_{F_0, F_1, F_2}(n) = \int_X F_0 \cdot T^n F_1 \cdot T^{2n} F_2 d\mu$ is not an approximate integral combination of $2$-step nilsequences.
\end{abstract}
\maketitle

\def\sr{\mathscr{S}}

\section{Introduction}

Let $(X,\mathscr{B}, \mu, T)$ be a measure-preserving system, and let $F_0, F_1,\dots,$ $F_k \in L^{\infty}(\mu)$. Motivated in large part by applications in combinatorics and in particular to questions about arithmetic progressions, there has been much interest in \emph{multiple correlation sequences}
\[ C_{F_0,\dots, F_k}(n) := \int_X F_0 \cdot T^{n} F_1 \cdots T^{kn} F_k d\mu.\]
In fact, much more general types of correlation sequences in which the powers $T, T^2,\dots, T^k$ appearing here are replaced by measure-preserving maps $T_1,\dots, T_k$ have been studied, but here we restrict attention here to this special form. 

In the case $k = 1$,  there is a very satisfactory spectral theory of such sequences and indeed one has
\begin{equation}\label{spectral} C_{F_0, F_1}(n) = \int^1_0 e^{-2\pi i n t} d\sigma(t)\end{equation} for some complex Borel measure $\sigma$ of bounded total variation. This follows from the Herglotz theorem on positive definite sequences (which applies directly in the case $F_0 = \overline{F_1}$) and a depolarization identity.

It is natural to ask to what extent this generalises to $k \geq 2$. In the words of Frantzikinakis \cite{nikos-open-problems},

``Finding a formula analogous to \eqref{spectral}, with the multiple correlation sequences  in
place of the single correlation sequences, is a problem of fundamental importance which
has been in the mind of experts for several years. A satisfactory solution is going to
give us new insights and significantly improve our ability to deal with multiple ergodic
averages.''

A result of Bergelson, Host and Kra \cite{bhk} describes the structure of multiple correlation sequences up to an error in $\ell^1$ or $\ell^2$. To state their result, we need to recall the notion of a nilsequence.

\begin{definition}[Nilsequence]
Let $k \geq 1$ be an integer. A $k$-step nilsequence is a sequence $(\phi(g^n x_0))_{n \in \Z}$. Here, $\phi : G \rightarrow \C$ is a continuous function satisfying the automorphy\footnote{Essentially equivalently, $\phi$ is a function on the \emph{nilmanifold} $G/\Gamma$.} condition $\phi(x\gamma) = \phi(x)$ for all $x \in G$ and all $\gamma \in \Gamma$, where $G$ is a simply-connected $k$-step nilpotent Lie group with discrete and cocompact subgroup $\Gamma$, and $g,x_0$ are fixed elements of $G$.\end{definition}

A careful discussion of this notion may be found in many places, for instance \cite{bhk}.  The following result is \cite[Theorem 1.9]{bhk}.

\begin{theorem}\label{bhk-thm}
Suppose that $(X,\mathscr{B}, \mu, T)$ is a measure-preserving system and that $F_0, F_1,\dots, F_k \in L^{\infty}(\mu)$. Suppose that $\Vert F_i \Vert_{\infty} \leq 1$. Then we have a decomposition
\[ C_{F_0,F_1,\dots, F_k}(n) = a(n) + b(n),\]
where $a(n)$ is a uniform limit of $k$-step nilsequences with $\Vert a \Vert_{\infty} \leq 1$, and $b$ is small in the sense that 
\[ \lim_{|I| \rightarrow \infty} \frac{1}{|I|}\sum_{n \in I} |b(n)| = 0\] as $I$ ranges over all subintervals of $\N$.
\end{theorem}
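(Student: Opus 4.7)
The plan is to apply the Host--Kra structure theory of characteristic factors. After passing to the ergodic decomposition of $(X,\mathscr{B},\mu,T)$ (and noting that both conclusions are stable under such a decomposition), reduce to the ergodic case. In this case Host and Kra have constructed a factor $Z = Z_k(X)$ of $X$ which is an inverse limit of ergodic $k$-step nilsystems and is \emph{characteristic} for the average in question: whenever some $F_i$ is orthogonal to $Z$ one has $\Vert \frac{1}{N}\sum_{n=1}^N T^n F_1 \cdots T^{kn} F_k \Vert_{L^2} \to 0$. Write $\tilde F_i := \E(F_i \mid Z)$, $F_i^\perp := F_i - \tilde F_i$, and distribute to obtain the splitting $C_{F_0,\dots,F_k}(n) = a(n) + b(n)$ where
\[ a(n) := \int_X \prod_{i=0}^k T^{in} \tilde F_i \, d\mu, \qquad b(n) := \sum_{\alpha \neq \mathbf 1} \int_X \prod_{i=0}^k T^{in} F_i^{(\alpha_i)} \, d\mu,\]
with $F_i^{(1)} = \tilde F_i$ and $F_i^{(0)} = F_i^\perp$.

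For the structured piece $a(n)$, use that $Z$ is an inverse limit of $k$-step nilsystems to approximate each $\tilde F_i$ in $L^\infty$ (up to a small $L^2$-error that may be absorbed into $b$) by a continuous function on a single nilmanifold quotient $G/\Gamma$ of $Z$, with base translation $g \in G$. On this nilsystem the correlation $\int_{G/\Gamma} \phi_0(x)\phi_1(g^n x)\cdots \phi_k(g^{kn}x)\, d(x)$ is a genuine $k$-step nilsequence: by a standard Leibman-type construction one realises it as $\Phi(h^n x_0)$ on a higher nilmanifold built from $G^{k+1}$ (equipped with a diagonal-type subgroup and the translation $(e,g,g^2,\dots,g^k)$) together with the product function $\Phi = \phi_0 \otimes \cdots \otimes \phi_k$. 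Passing to the inverse limit of these approximations exhibits $a$ as a uniform limit of $k$-step nilsequences, and $\Vert a\Vert_\infty \leq 1$ follows from $\Vert \tilde F_i\Vert_\infty \leq \Vert F_i\Vert_\infty \leq 1$ together with Cauchy--Schwarz.

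For the error piece $b(n)$, each summand contains at least one factor $F_j^\perp$ with $\E(F_j^\perp \mid Z) = 0$. Iterating the van der Corput inequality in the variable $n$ a total of $k$ times converts the Cesaro average of $|b(n)|$ over a subinterval $I \subset \N$ into an expression controlled by the Host--Kra seminorm $\Vert F_j^\perp \Vert_{U^{k+1}(X)}$, which vanishes by the very definition of $Z_k$. The van der Corput scheme is uniform in the base interval, so the conclusion $\frac{1}{|I|}\sum_{n \in I}|b(n)| \to 0$ holds uniformly over all subintervals of $\N$, not merely initial ones. The main obstacle is precisely this last step: carrying out the Cauchy--Schwarz and van der Corput bookkeeping with sufficient care to obtain control uniformly in $I$, and invoking the deep fact that vanishing of $\Vert \cdot \Vert_{U^{k+1}}$ on functions orthogonal to $Z_k$ really does force the multi-correlation to be small on average. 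The rest of the argument is structural and is provided by the Host--Kra description of $Z_k$ as an inverse limit of nilsystems, together with standard equidistribution theory on nilmanifolds.
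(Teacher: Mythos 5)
This theorem is not proved in the paper at all: it is quoted verbatim as background from Bergelson--Host--Kra \cite{bhk}, so there is no internal proof to compare against. Your sketch does follow the lines of the published argument: the Host--Kra factor $Z_k$ realised as an inverse limit of ergodic $k$-step nilsystems, the multilinear splitting $F_i = \E(F_i\mid Z_k) + F_i^{\perp}$, the Leibman/Lesigne-type fact that a multicorrelation sequence on a genuine nilsystem is itself a $k$-step nilsequence (via the arithmetic-progression subgroup of $G^{k+1}$ and the translation $(e,g,g^2,\dots,g^k)$), and seminorm control of every term containing some $F_j^{\perp}$.

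Two steps are glossed in a way that hides real content. First, the reduction to the ergodic case is not the routine observation you make it out to be: Bergelson--Host--Kra prove their Theorem 1.9 for \emph{ergodic} systems, and the class of uniform limits of $k$-step nilsequences is not obviously closed under integration over the ergodic components --- one must produce nilsequence approximants depending measurably on the component and converging uniformly in it. This extension to general systems is a known delicate point (and is one reason later work of Frantzikinakis revisits the decomposition), so ``both conclusions are stable under ergodic decomposition'' needs an actual argument. Second, the characteristic-factor property gives decay of $\frac{1}{N}\sum_{n\leq N} b(n)$, not of $\frac{1}{|I|}\sum_{n\in I}|b(n)|$; to insert the absolute values one must, e.g., bound the square of this average by $\frac{1}{|I|}\sum_{n\in I}|b(n)|^2$ and recognise $|b(n)|^2$ as a multicorrelation sequence of the product system $X\times X$, where the relevant function still has vanishing $U^{k+1}$-seminorm --- at which point the non-ergodicity issue resurfaces, since products of ergodic systems need not be ergodic. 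You correctly identify the uniform-in-$I$ van der Corput bookkeeping as the crux, but the passage from $b(n)$ to $|b(n)|$ and the ergodic reduction both need to be made explicit before this counts as a proof of the theorem as stated here (for general, possibly non-ergodic, systems).
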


For applications involving the behaviour of correlation sequences at a sparse sequence of $n$, the error term here is too big. Frantzikinakis \cite[Problem 1]{nikos-open-problems} has suggested, in the context of seeking a generalisation of \eqref{spectral}, that a variant of Theorem \ref{bhk-thm} should hold with an $\ell^{\infty}$ error term. Note that in \eqref{spectral}, we have not just one nilsequence $(e^{2\pi i n t})_{n \in \N}$, but an integral combination of (1-step) nilsequences. Frantzikinakis's formulation generalises this concept to higher-step nilsequences.

\begin{definition}
An integral combination of $k$-step nilsequences is a sequence of the form
\[ a(n) = \int_M a_m(n) d\sigma(m).\] Here, $M$ is a compact metric space, $\sigma$ is a complex Borel measure of bounded variation, and the $a_m$ are $k$-step nilsequences, and with the map $m \mapsto a_m(n)$ being measurable for each $n$. 
\end{definition}

Our main theorem states that, even in the case $k = 2$, one cannot hope for a version of Theorem \ref{bhk-thm} in which the error $b$ is small in $\ell^{\infty}$, even if one allows $a$ to be an integral combination of nilsequences.

\begin{theorem}\label{mainthm}
There is a measure-preserving system $(X,\mathscr{B}, \mu, T)$, functions $F_0, F_1, F_2 \in L^{\infty}(\mu)$ and an $\eps > 0$ such that the correlation sequence
\[ C_{F_0, F_1, F_2}(n) := \int_X F_0 \cdot T^n F_1 \cdot T^{2n} F_2 d\mu\] cannot be written as $a(n) + b(n)$, where $\Vert b \Vert_{\infty} \leq \eps$ and $a$ is an integral combination of $2$-step nilsequences.
\end{theorem}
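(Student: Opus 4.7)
The plan is to build an explicit counterexample, producing a triple correlation sequence $c(n)$ with a tailor-made spectral structure, and then verifying that $c(n)$ is not $\ell^\infty$-close to any integral combination of 2-step nilsequences. The construction and the obstruction argument are logically independent, so I would address them in turn.

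For the construction, I would take the system $(X, \mathscr{B}, \mu, T)$ to be a weighted countable product (or inverse limit) of Heisenberg nilsystems $(G_j/\Gamma_j, T_{g_j})$, indexed by a carefully chosen sequence of frequencies $\alpha_j \in [0,1)$. A direct computation on each Heisenberg factor, using characters of $G_j/\Gamma_j$ as the functions, shows that the triple correlation $\int F_0 \cdot T^n F_1 \cdot T^{2n} F_2 \, d\mu$ on that factor produces a pure quadratic phase $e^{2\pi i \alpha_j n^2}$ (the linear phase one also picks up can be absorbed by adjusting the characters). Assembling the $F_i$ from products of such characters weighted by a sequence $\lambda_j > 0$, the correlation becomes
\[ c(n) = \sum_j \lambda_j e^{2\pi i \alpha_j n^2}. \]
The parameters $\lambda_j$ and $\alpha_j$ are free, to be tuned against the obstruction argument.

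For the non-approximation step, I would choose $(\alpha_j)$ highly lacunary, e.g.\ $\alpha_j = 2^{-j^2}$, and $(\lambda_j)$ in $\ell^\infty$ but not $\ell^1$, e.g.\ $\lambda_j \sim 1/j$. The intuition is that an integral combination $a(n) = \int a_m(n)\,d\sigma(m)$ of 2-step nilsequences has a ``quadratic spectrum'' of finite total variation $\|\sigma\|$, and so can contribute only an absolutely summable mass to the lacunary frequencies $\alpha_j$; but $c(n)$ has non-summable mass there, so no such $a$ can match $c$ pointwise within $\eps$. To make this rigorous, one has to extract each $\lambda_j$ by means of a linear functional $L_j$ on bounded sequences which (i) satisfies $L_j(c) \approx \lambda_j$, and (ii) is small on any 2-step nilsequence except those ``quadratically resonant'' with frequency $\alpha_j$. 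A natural candidate is an average of the form $L_j(u) = \lim_N \tfrac{1}{N} \sum_{n \leq N} u(n) e^{-2\pi i \alpha_j n^2}$, and showing this functional picks out exactly a quadratic-Fourier coefficient on the space of 2-step nilsequences is the key step.

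The main obstacle will be controlling $L_j$ uniformly over all 2-step nilsequences, not just over the special family $\{e^{2\pi i \alpha n^2}\}$: one needs a quantitative equidistribution statement that applies to all polynomial orbits on all 2-step nilmanifolds simultaneously, and that extracts exactly the contribution at the chosen lacunary frequency. The natural tool here is the Green--Tao quantitative equidistribution theorem for polynomial orbits on nilmanifolds, combined with a decomposition of a general 2-step nilsequence into a ``resonant quadratic part'' and an equidistributed remainder whose $L_j$-average is negligible. The bookkeeping required to do this uniformly in $m$ under the integral against $\sigma$, while keeping the total contribution to $\sum_j |L_j(a)|$ controlled by $\|\sigma\|$, is where the real technical work of the proof should reside.
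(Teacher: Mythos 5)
Your approach has a fatal circularity in both halves, and it is quite different from (and does not recover) the paper's argument. First, the construction: if the $\lambda_j>0$ are not absolutely summable then $c(0)=\sum_j\lambda_j=\infty$, so $c$ is not even a bounded sequence, whereas any correlation sequence of $L^\infty$ functions is bounded by $\prod_i\Vert F_i\Vert_\infty$; and if the $\lambda_j$ \emph{are} summable, then $c(n)=\sum_j\lambda_j e(\alpha_j n^2)$ is itself an integral combination of $2$-step nilsequences (take $\sigma$ to be the discrete measure with masses $\lambda_j$), so it is no counterexample at all. Second, and more fundamentally, the obstruction step cannot work: each functional $L_j(u)=\lim_N\frac1N\sum_{n\leq N}u(n)e(-\alpha_j n^2)$ has norm $1$ on $\ell^\infty$, so $|L_j(b)|\leq\eps$ for the allowed error $b$, and $L_j$ can therefore only detect coefficients $\lambda_j>\eps$. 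Boundedness of $c$ forces all but finitely many $\lambda_j$ to be below $\eps$, and those finitely many are matched exactly by a finite sum of quadratic phases. More generally, any detector built from density-one Ces\`aro averages is doomed from the start, since the Bergelson--Host--Kra theorem (Theorem \ref{bhk-thm}) already guarantees approximation in that averaged sense; the whole difficulty of the problem is the $\ell^\infty$ error. (Your claim that an integral combination has ``absolutely summable quadratic mass'' is also false even for a single nilsequence: Bessel-type bounds give $\ell^2$ control of such coefficients, not $\ell^1$.)

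The paper's proof avoids all of this by never using full averages. It constructs a sparse set $\sr$ with $|\sr[N]|/\log N\to\infty$ and shows, via a Yekhanin-inspired digit construction (Section \ref{sec2}) together with the Furstenberg correspondence principle, that \emph{every} function $\eta:\sr\to\{1,-\frac13\}$ arises as the restriction to $\sr$ of some correlation sequence $C_{F_0,F_1,F_2}$. It then runs an entropy argument: nilsequences of bounded complexity restricted to $[N]$ are covered by only $N^{O(1)}$ balls in $\ell^\infty$, while there are $2^{|\sr[N]|}=2^{\omega(\log N)}$ sign patterns on $\sr[N]$, so a random $\eta$ is, with the normalization $\frac{1}{|\sr[N]|}\sum_{n\in\sr[N]}$, asymptotically orthogonal to all nilsequences (Proposition \ref{lem0.1}); the dominated convergence theorem then transfers this to integral combinations. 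The key conceptual point you are missing is that one must average over the sparse set itself, where the error $b$ still only contributes $\eps$ but the counting mismatch between realizable correlation sequences and nilsequences becomes visible.
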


This theorem casts some serious doubt on the existence of a formula generalising \eqref{spectral}. 

Theorem \ref{mainthm} does not provide a negative answer to \cite[Problem 1]{nikos-open-problems}, because Frantzikinakis allows the automorphic functions $\phi$ in the definition of a nilsequence to be merely Riemann-integrable, rather than continuous. He calls these \emph{generalised} nilsequences. An explanation of why our construction does not allow one to establish an analogue of Theorem \ref{mainthm} for generalised nilsequences is given in Appendix \ref{app-A}. Note, however, that the Riemann-integrable functions $\phi$ appearing in Appendix \ref{app-A} are very singular and we certainly do not expect that the corresponding generalised nilsequences have any important role to play in the theory.

One reason for considering Riemann-integrable functions rather than just continuous ones is that there is a somewhat natural and well-studied class of nilsequences in which $\phi$ is not continuous, namely the bracket polynomial phases \cite{bl-bracket}. In this case, the corresponding $\phi$ have only mild discontinuities, and our argument adapts easily to show that Theorem \ref{mainthm} remains true even if one allows $a$ to be an integral combination of this more general class of nilsequences. We sketch the argument at the end of Section \ref{sec3-ent}. 

A key motivation for Frantzikinakis in formulating \cite[Problem 1]{nikos-open-problems} was that it provides a potential route to understanding Szemer\'edi's theorem with common difference in a sparse random set, a problem for which our current understanding is extremely incomplete for progressions of length 3 or longer (see \cite{jop-random-szem} for recent progress). Whilst Theorem~\ref{mainthm} seems to rule this out as a viable strategy, our example unfortunately does not give any new information about Szemer\'edi's theorem with common differences from a random set, which remains a tantalising open problem.

\emph{Notation.} Our notation is standard. We will occasionally write $\E_{x \in A}$ for $\frac{1}{|A|}\sum_{x \in A}$, where $A$ is a finite set. We write $[N] = \{1,2,\dots, N\}$ as usual, and sometimes we will write $[0,N-1] = \{0,1,2,\dots, N-1\}$. For real $t$, we write $e(t) = e^{2\pi i t}$.

\emph{Acknowledgements.} JB would like to thank Xuancheng Shao for helpful discussions
and pointers to the literature. The authors would like to thank Bryna Kra for pointing them to a reference for Proposition \ref{prop1.1}, and Nikos Frantzikinakis for helpful comments on the first draft of the paper.

\section{Outline of the argument}

Our argument is part deterministic and part random. It is random in the sense that we do not explicitly construct a system $(X,\mathscr{B}, \mu, T)$ and functions $F_0, F_1, F_2$ for which the correlation sequence $C_{F_0, F_1, F_2}(n)$ is not approximable by an integral combination of nilsequences, but rather we show there are too many possibilities for the correlation functions $C_{F_0, F_1, F_2}(n)$ for this to be so.

To do this, we first explicitly construct a certain infinite sequence $\sr \subset \N$ whose growth is slower than exponential in the sense that 
\begin{equation}\label{s-sparse} \lim_{N \rightarrow \infty} \frac{|\sr[N]|}{\log N} = \infty,\end{equation}
where $\sr[N] := \# \{n \in \sr : n \leq N\}$. 

We show that for any choice of function $\eta : \sr \rightarrow \{1, -\frac{1}{3}\}$ there is a system $(X,\mathscr{B}, \mu, T)$ and functions $F_0, F_1, F_2$ such that $C_{F_0 ,F_1, F_2}(n) = \eta(n)$ for $n \in \sr$.

For a random choice of $\eta$, such a function will almost surely not be approximable by an integral combination of nilsequences. We give the details of this deduction, which uses nothing about $\sr$ other than the growth property \eqref{s-sparse}, in Proposition \ref{lem0.1}. 

The heart of the argument, then, is the construction of the system $(X,\mathscr{B}, \mu, T)$ and the functions $F_0, F_1, F_2$, given $\eta : \sr \rightarrow \{1, -\frac{1}{3}\}$. This is assembled from a sequence of finitary examples, via a (well-known) variant of Furstenberg's correspondence principle, and here the specific nature of $\sr$ is critical.

The idea behind the construction of these finitary examples ultimately comes from coding theory, and in particular a construction of Yekhanin \cite{yekhanin}. We will only need the most basic form of these ideas; for instance, we can replace all the finite-field theory in Yekhanin's work with the simple observation that the function $\psi : \Z \rightarrow \{-1,1\}$ defined by $\psi(0) = 1$, $\psi(1) = \psi(2) = -1$, and periodic mod $3$ has the property that \[ \psi(x) \psi(x+d)\psi(x+2d) = \left\{\begin{array}{ll}  \psi(x)   & d \equiv 0 \md{3} \\ 1 & d \neq 0 \md{3} .\end{array} \right.\]
The idea of using Yekhanin's construction to give interesting examples in the additive combinatorics of higher-order correlations first arose in the finite field setting, in joint work of the first author and Labib \cite{jop-labib}. Those ideas have inspired the present work.

\section{Entropy and nilsequences}\label{sec3-ent}

\begin{proposition}\label{lem0.1}
Let $\sr$ be an increasing sequence of natural numbers such that
\begin{equation}\label{log-der} \lim_{N \rightarrow \infty} \frac{|\sr[N]|}{\log N} = \infty.\end{equation}
Then there is a function $\eta : \sr \rightarrow \{1, -\frac{1}{3}\}$ such that 
\begin{equation}\label{eq02} \lim_{N \rightarrow \infty} \frac{1}{|\sr[N]|}\sum_{n \in \sr[N]} \eta(s) a(s) = 0\end{equation} for all nilsequences $a$.
\end{proposition}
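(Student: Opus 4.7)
The natural strategy is probabilistic. I would sample $\eta(n)$ for $n \in \sr$ independently with $\P[\eta(n) = 1] = \tfrac14$ and $\P[\eta(n) = -\tfrac13] = \tfrac34$, so that $\E[\eta(n)] = 0$ and $|\eta(n)| \le 1$. For any fixed nilsequence $a$ with $\|a\|_\infty \le 1$, Hoeffding's inequality then gives
\[ \P\!\left[\,\bigg|\frac{1}{|\sr[N]|}\sum_{n\in\sr[N]} \eta(n) a(n)\bigg| > \delta\right] \le 2 \exp(-c\delta^2 |\sr[N]|). \]
The real task is to upgrade this pointwise concentration to an estimate valid \emph{simultaneously} over all nilsequences.

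This, in turn, requires a metric-entropy bound for nilsequences on $[N]$. For parameters $k, d, L \in \N$, let $\mathcal{N}_{k,d,L}$ denote the class of $k$-step nilsequences $\phi(g^n x_0)$ with $\dim G \le d$, $\phi$ being $L$-Lipschitz with $\|\phi\|_\infty \le 1$, and with $(g, x_0)$ ranging over some fixed compact subset of $G \times G$. A perturbation of $(g, x_0)$ of size $\eps$ perturbs the associated nilsequence on $[N]$ by at most $O(LN\eps)$ in $\ell^\infty$ norm (the error growing linearly because of the iterated multiplication defining $g^n$). Since $(g, x_0)$ varies in a parameter space of finite dimension $O(d)$, the class $\mathcal{N}_{k,d,L}$ admits a $\delta$-net in $\ell^\infty([N])$ of polynomial size $C(k,d,L,\delta) N^{D(d)}$.

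Combining Hoeffding with a union bound over this net, and invoking the hypothesis $|\sr[N]|/\log N \to \infty$, the failure probability for $\mathcal{N}_{k,d,L}$ decays faster than any fixed power of $1/N$; in particular it is summable along a sparse subsequence such as $N_j = 2^j$. Borel--Cantelli, together with an easy interpolation argument between consecutive $N_j$ (using $|\eta a| \le 1$), yields that almost surely every $a \in \mathcal{N}_{k,d,L}$ satisfies $\limsup_N \bigl|\frac{1}{|\sr[N]|}\sum_n \eta(n) a(n)\bigr| \le 2\delta$. Taking a countable intersection over $k, d, L \in \N$ and $\delta = 1/m$ with $m \to \infty$, we find that almost surely this limsup vanishes for every Lipschitz nilsequence. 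Since continuous functions on the compact nilmanifold $G/\Gamma$ are uniform limits of Lipschitz ones (Stone--Weierstrass), this extends to all nilsequences, and so a valid deterministic $\eta$ exists.

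The main obstacle is the polynomial metric-entropy bound for $\mathcal{N}_{k,d,L}$: this is where the finite-dimensional structure of nilpotent Lie groups and the continuity of $\phi$ genuinely enter, and it is precisely this polynomial count that dictates the logarithmic growth threshold in \eqref{log-der} --- any superpolynomial entropy would force a correspondingly stronger lower bound on $|\sr[N]|$. The remainder of the argument is a routine Hoeffding--Borel--Cantelli sketch once this bound is in hand.
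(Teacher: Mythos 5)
Your proposal is correct and follows essentially the same route as the paper: random signs with mean zero, Hoeffding, a union bound over a polynomial-size $\ell^\infty([N])$-net for nilsequences of bounded complexity, Borel--Cantelli, and a countable exhaustion over complexity parameters; the paper likewise treats the polynomial covering-number bound as the key external input (citing Frantzikinakis and Altman) rather than proving it. The only quibble is that perturbing $g$ by $\eps$ moves $g^n x_0$ by roughly $N^k\eps$ rather than $N\eps$ in a $k$-step group, but this does not affect the polynomial entropy bound.
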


\begin{proof} The space of $C^{\infty}$-functions on $G/\Gamma$ is dense in the space of continuous functions; to approximate a continuous function by a smooth function, average with respect to a smooth kernel supported near the identity on $G$. It therefore suffices to verify \eqref{eq02} for $a(n) = \phi (g^n x)$ with $\phi \in C^{\infty}(G/\Gamma)$. Now we use the fact that there is a map 
\[ \mbox{Complexity} : \{ \mbox{smooth nilsequences} \} \rightarrow (0,\infty)\] and a function $M : (0,\infty) \times (0,1) \rightarrow (0,\infty)$ such that the set
\[ \{ a : \mbox{Complexity}(a) \leq C\}\] can be covered by $N^{M(C,\eps)}$ balls of radius $\eps$ in $\ell^{\infty}[N]$.

Results of this type were first observed by Frantzikinakis \cite[Proposition 6.2]{nikos-hardy}, and in fact Proposition \ref{lem0.1} and its proof are very closely related to \cite[Theorem 1.4]{nikos-hardy}. A discussion which gives what we need here is in the appendix of Altman \cite{altman} (note that $(g^n x_0)_{n \in \Z}$ is a particular example of a polynomial sequence as considered by Altman).

We will pick the values of $\eta(n)$ at random, choosing $\eta(n) = -\frac{1}{3}$ with probability $\frac{3}{4}$, and $\eta(n) = 1$ with probability $\frac{1}{4}$, these choices being independent for different values of $n \in \sr$. Then $\E \eta(n) = 0$. By well-known large deviation estimates (Hoeffding's inequality), for any fixed $1$-bounded functon $b$, and for any distinct $n_1,\dots, n_m$,
\begin{equation}\label{large-dev} \P( |\sum_{i = 1}^m \eta(n_i) b(n_i)| \geq t  ) \ll e^{-ct^2/m}, \end{equation} where $c > 0$ is absolute.

Let $\omega : \N \rightarrow (0,\infty)$ be some function tending to infinity, to be specified later.

For each $N$, let $E_N$ be the following event: for all $1$-bounded nilsequences $a$ of complexity $\leq \omega(N)$, 
\begin{equation}\label{ej} |\sum_{n \in \sr[N] } \eta(n) a(n)| \leq \frac{1}{\omega(N)} | \sr[N] |.\end{equation}
We estimate $\P(E_N)$ as follows. Pick some collection $\{ a_1,\dots, a_J\}$, $J \leq N^{M(\omega(N),1/2\omega(N))}$ of functions such that, for every $1$-bounded nilsequence~$a$ of complexity at most $\omega(N)$, there is some $a_i$ with $\Vert a - a_i \Vert_{\ell^{\infty}[N]} \leq 1/2\omega(N)$. Note that we do not need to assume that the $a_i$ are nilsequences (though this could be arranged if desired) and they are automatically $2$-bounded.

If we are not in $E_N$, there is some $a_i$ such that 
\begin{equation}\label{eji} |\sum_{n \in \sr[N] } \eta(n) a_i(n)| \geq \frac{1}{2 \omega(N)} | \sr[N] |.\end{equation}

By \eqref{large-dev}, the probability of \eqref{eji} happening, for some fixed $i$, is bounded above by $e^{-c' |\sr[N]|/\omega(N)^2}$ for some $c' > 0$. Summing over $i$, it follows that 
\[ \P( \neg E_N) \leq N^{M(\omega(N), 1/2\omega(N))} e^{-c' |\sr[N]|/\omega(N)^2}.\]
Choose $\omega$ (with $\omega(N) \rightarrow \infty)$ so that 
\[ \frac{|\sr[N]|}{\log N} > \frac{\omega(N)^2}{c'} \big(  10 + M(\omega(N), 1/2\omega(N))\big)\] for $N$ sufficiently large. (Here, of course, we have used the assumption on $\sr$). This then means that
\[ \P(\neg E_N) \leq N^{-10}\] for large $N$. In particular, $\sum_N \P(\neg E_N) < \infty$ which, by Borel-Cantelli, implies that almost surely only finitely many of the $\neg E_N$ occur. In particular, there is some particular choice of $\eta$ such that \eqref{ej} holds for all sufficiently large $N$.  Since every nilsequence has finite complexity, this implies the result. \end{proof}

\emph{Remark.} There is of course nothing special about $\{1, -\frac{1}{3}\}$; any set containing both positive and negative numbers would do.\vspace*{8pt}

To conclude this section, let us quickly sketch how one could extend Proposition \ref{lem0.1} to include the case where $a()$ is a bracket polynomial or a product of such (and hence not a nilsequence with a \emph{continuous} automorphic function $\phi$). Write $\chi_{\alpha, \beta}(n) := e(\alpha n\lfloor \beta n\rfloor)$. The key point is that the set of functions $\chi_{\alpha, \beta}(n)$, like the set of nilsequences of fixed complexity, has polynomially-bounded covering numbers in $\ell^{\infty}[N]$. 

To see why this is so, first note that $\chi_{\alpha, \beta}$ depends only on $\alpha \md{1}$, so we may assume $0 \leq \alpha < 1$. Next, replacing $\beta$ by $\beta +k$ for $k \in \Z$ has the effect of multiplying by a quadratic phase $e(\gamma n^2)$ (where $\gamma = \alpha k$). However, the set of all quadratic phases $e(\gamma n^2)$ is covered by $\ll_{\eps} N^2$ balls of radius $\eps$ in $\ell^{\infty}[N]$, since we may assume $0 \leq \gamma < 1$ and changing $\gamma$ by $\frac{\eps}{N^2}$ only changes $e(\gamma n^2)$ by $O(\eps)$, uniformly for $n \leq N$.

It therefore suffices to show that the covering numbers of the set $\Xi := \{ \chi_{\alpha, \beta} : 0 \leq \alpha, \beta < 1\}$ are polynomially bounded in $\ell^{\infty}[N]$. Now, restricted to $n \leq N$, there are only polynomially many functions $\lfloor \beta n\rfloor$ as $\beta$ ranges in $[0, 1)$. Indeed, the map $\beta \mapsto (\lfloor \beta n\rfloor)_{n \leq N}$ is only discontinuous at the points where $\beta n \in \Z$ for some $n \leq N$, of which there are no more than $N^2$ with $0 \leq \beta < 1$. Thus $\chi_{\alpha, \beta} = \chi_{\alpha, \beta'}$, with $\beta'$ varying in a set of size $N^2$. Changing $\alpha$ by $\frac{\eps}{N^2}$ only changes $\chi_{\alpha, \beta}(n)$ by $O(\eps)$, uniformly for $n \leq N$. Therefore the covering number of $\Xi$ in $\ell^{\infty}[N]$ is $\ll_{\eps} N^4$.

It follows immediately that, for fixed $C$, the set of functions of type $e(\sum_{i = 1}^k \alpha_i n [\beta_i n])$, where $k \leq C$, is covered by $N^{M(C,\eps)}$ balls of radius $\eps$ in $\ell^{\infty}[N]$. One could include various types of 1-step nilsequence or bracket polynomial and obtain a similar result. 

Bounds on covering numbers were all we needed to know about nilsequences, and the rest of the argument goes over verbatim.

\section{The heart of the construction}\label{sec2}

Define  $\psi : \Z \rightarrow \{-1,1\}$ to be the function with $\psi(0) = 1$, $\psi(1) = \psi(2) = -1$, and periodic mod $3$. The crucial property of this function we will use is the following, which is easily checked:
\begin{equation}\label{3ap-orthog} \psi(x) \psi(x+d)\psi(x+2d) = \psi(x)\end{equation} if $d \equiv 0 \md{3}$, and $1$ if $d \neq 0 \md{3}$.

Fix, once and for all, a sequence $M_1 < M_2 < \cdots$ be a sequence of positive integers such that 
\begin{enumerate}
\item Each $M_i$ is a multiple of $3$;
\item $\lim_{n \rightarrow \infty} k^{-2} \sum_{i=1}^k \log M_i = 0$;
\item $\prod_{i=1}^{\infty} (1 - \frac{3}{M_i}) = \gamma > 0$.
\end{enumerate}
For instance, one could take $M_i = 3i^2$.

Define
\[ \Omega_{k} := \{ (x_1,x_2,\dots) : 0 \leq x_i < M_i, x_{k+1} = x_{k+2} = \cdots = 0\}.\] 
Later on we will need the technical variant
\[ \tilde \Omega_{k} := \{ (x_1,x_2,\dots) : 0 \leq x_i < M_i - 3, x_{k+1} = x_{k+2} = \cdots = 0\}.\] 
Define also $\Sigma_k$ to consist of all sequences $(x_1,x_2,\dots)$ with precisely two nonzero entries $x_a, x_b$, both of which equal 1, and with $x_{k+1} = x_{k+2} = \cdots = 0$. Write
\[ \Omega := \bigcup_k \Omega_{k}, \quad \tilde\Omega := \bigcup_k \tilde\Omega_{k}, \quad \Sigma := \bigcup_k \Sigma_k.\]

We have a bijective map
\[ \beta :  \Omega \rightarrow \Z_{\geq 0}\] defined by 
\[ \beta(x_1,x_2,\dots) = x_1 + M_1 x_2 + M_1 M_2 x_3 + \cdots .\]

Let $\sr = \beta(\Sigma)$. Thus $\sr$ consists of the sums of two distinct elements of the sequence $\{1, M_1, M_1M_2, M_1M_2M_3,\dots\}$. We claim that $\sr$ satisfies the hypothesis \eqref{log-der} of Lemma \ref{lem0.1}, that is to say $\lim_{N \rightarrow \infty} \frac{|\sr[N]|}{\log N} = \infty$.

To see this, let $k$ be maximal so that $M_1 \cdots M_k \leq N/2$. Then $|\sr[N]| \geq \binom{k}{2}$, whilst $\log (N/2) \leq \sum_{i=1}^{k+1} \log M_i$. Therefore it is enough that 
\[ \lim_{k \rightarrow \infty} k^{-2}\sum_{i=1}^{k+1} \log M_i = 0,\] which follows immediately from assumption (2) above.

We now apply Lemma \ref{lem0.1} to get a function $\eta : \sr \rightarrow \{1, -\frac{1}{3}\}$ satisfying \eqref{eq02}.  Define
\[ \Sigma^+_{k} := \{x \in \Sigma_k : \eta(\beta(x)) = 1\}\quad \mbox{and} \quad \Sigma^-_{k} := \{ x \in \Sigma_k :  \eta(\beta(x)) = -\frac{1}{3}\}.\]
Thus $\Sigma_k = \Sigma^-_k\cup \Sigma^+_k$.

We introduce one more piece of notation. If $z \in \Sigma_k$ and if $x \in \Omega_{k}$ then we write
\[ \sigma_z(x ) := \sum_{i \in [k] : z_i = 0} x_i.\]

Now we come to the crucial definition. Let $k \in \N$. For $x \in \Omega_{k}$ define
\begin{equation}\label{f-def} f_k(\beta(x)) = \prod_{z \in \Sigma^k_-} \psi(\sigma_z(x)).\end{equation}
Note that $\beta(\Omega_{k}) = [0,N_k-1]$, where
\begin{equation}\label{Nk-def} N_k := M_1 \cdots M_k,\end{equation}
and so $f_k$ is a well-defined function on $[0, N_k-1]$, taking values in $\{-1,1\}$. Define also the technical variant
\begin{equation}\label{f-tilde-def} \tilde f_k(\beta(x)) := 1_{x \in \tilde \Omega_k} f_k(\beta(x)).\end{equation} Thus $\tilde f_k$ is defined on $[0,N_k - 1]$ and takes values in $\{-1,0,1\}$. Extend both $f_k$ and $\tilde f_k$ to functions on all of $\Z_{\geq 0}$ by defining $f_k(n) = \tilde f_k(n) = 0$ for $n \geq N_k$.

The following lemma is the heart of the argument. Here, recall that $\gamma > 0$ is just a positive constant (appearing in point (3) of the list of properties satisfied by the $M_i$).

\begin{lemma}\label{limit-cor}
For $d \in \Z_{\geq 0}$, write 
\[ S_k(d) := \frac{1}{N_k} \sum_{n \in [0,N_k-1]} \tilde f_k(n) f_k(n + d) f_k(n + 2d).\] 
Then for $d \in \sr$ we have $\lim_{k \rightarrow \infty} S_k(d) = \gamma \eta(d)$.
\end{lemma}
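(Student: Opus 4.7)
The plan is to parameterise the sum defining $S_k(d)$ by the mixed-radix variable $x \in \Omega_k$ via $n = \beta(x)$, use the $\tilde f_k$ cutoff to arrange ``carry-free'' addition, and then collapse the triple product over $\Sigma_k^-$ using the key identity \eqref{3ap-orthog}. Fix $d = \beta(z)$ with $z \in \Sigma_{k_0}$, and take $k \geq k_0$ throughout, so that $z \in \Sigma_k$.

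First I would observe that $\tilde f_k(\beta(x)) \neq 0$ forces $x \in \tilde\Omega_k$, so $x_i \leq M_i - 4$ for every $i$, while $z$ has coordinates in $\{0,1\}$. Hence $x + z$ and $x + 2z$ still lie in $\Omega_k$ with no carrying in the $M_i$-adic addition, so $\beta(x) + d = \beta(x+z)$ and $\beta(x) + 2d = \beta(x+2z)$. Substituting \eqref{f-def}, the summand becomes
\[ \prod_{z' \in \Sigma_k^-} \psi(\sigma_{z'}(x))\,\psi(\sigma_{z'}(x) + \delta(z',z))\,\psi(\sigma_{z'}(x) + 2\delta(z',z)), \]
where $\delta(z',z) := \sum_{i : z'_i = 0} z_i$. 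If $z$ has its two $1$'s at positions $a, b$, then $\delta(z',z) = 2 - |\{a,b\} \cap \mathrm{supp}(z')| \in \{0,1,2\}$, and this vanishes mod $3$ precisely when $z' = z$. By \eqref{3ap-orthog}, the $z'$-factor equals $1$ whenever $z' \neq z$ and equals $\psi(\sigma_z(x))$ when $z' = z$.

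I would then split into two cases. If $\eta(d) = 1$ then $z \in \Sigma_k^+$, so $z \notin \Sigma_k^-$ and every factor is $1$; hence $S_k(d) = |\tilde\Omega_k|/N_k = \prod_{i=1}^k (1 - 3/M_i) \to \gamma = \gamma\eta(d)$. If $\eta(d) = -\tfrac{1}{3}$ then $z \in \Sigma_k^-$ and
\[ S_k(d) = \frac{1}{N_k} \sum_{x \in \tilde\Omega_k} \psi(\sigma_z(x)), \qquad \sigma_z(x) = \sum_{i \neq a, b} x_i. \]
Summing out any single coordinate $x_{i_0}$ with $i_0 \neq a,b$ over $\{0,\dots,M_{i_0}-4\}$ gives a factor $-(M_{i_0}-3)/3$, using $\psi(0)+\psi(1)+\psi(2) = -1$ together with the fact that $M_{i_0} - 3$ is a multiple of $3$, and this factor is independent of the remaining coordinates whose sums then run freely. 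The result is $-\tfrac{1}{3}\prod_{i=1}^k (M_i - 3)$, and dividing by $N_k$ yields $-\tfrac{\gamma}{3} = \gamma\eta(d)$ in the limit.

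The main obstacle, and really the crux of the whole paper, is the collapse in the second paragraph: a priori there are roughly $\binom{k}{2}$ nontrivial factors, and it is the combination of the $3$-AP identity \eqref{3ap-orthog} with the combinatorial observation ``$\delta(z',z) \equiv 0 \pmod 3$ iff $z' = z$'' that reduces them to one surviving factor. The cutoff built into $\tilde\Omega_k$ is precisely what allows \eqref{3ap-orthog} to be applied coordinate-wise, since without it the shifts $\beta(x)+d$ and $\beta(x)+2d$ would involve wrap-around configurations that destroy the identity.
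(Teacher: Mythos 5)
Your proposal is correct and follows essentially the same route as the paper's proof: parameterise by $x \in \tilde\Omega_k$ to get carry-free addition, collapse the product over $\Sigma_k^-$ via \eqref{3ap-orthog} using the observation that $\sigma_{z'}(z) \equiv 0 \pmod 3$ iff $z'=z$, and then evaluate the two resulting averages. The only cosmetic difference is that you sum out a single free coordinate explicitly where the paper invokes equidistribution of $\sigma_z(x)$ mod $3$ over the box $\tilde\Omega_k$; these are the same computation.
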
 
\begin{proof}
Let $d \in \sr = \beta(\Sigma)$. For $k$ large enough, $d \in \beta(\Sigma_k)$, and we will assume this is so in what follows.

From the definition of $\tilde f_k$, we see that the sum over $n$ ranges over $n = \beta(x)$, $x \in \tilde\Omega_{k}$. Now for $n$ of this form and for $d = \beta(y)$, $y \in \Sigma_k$, we have $x+y, x+ 2y \in \Omega_{k}$ and moreover
\[ \beta(x + y) = \beta(x) + \beta(y) = n + d,\]
\[ \beta(x + 2y) = \beta(x) + 2 \beta(y) = n + 2d.\] Note that this ``lack of carries'' was precisely the reason for defining the set $\tilde \Omega_{k}$. It follows that 
\[ S_k(d) = \E_{x  \in \Omega_{k}}  \tilde f_k(\beta(x)) f_k(\beta(x + y)) f_k(\beta(x + 2y)),\] for $d = \beta(y)$, $y \in \Sigma_k$.
Substituting the definitions of $f_k, \tilde f_k$ (and noting that $\sigma_z$ is linear), we see that 
\[ S_k(d) = \E_{x \in \Omega_{k}} 1_{x \in \tilde \Omega_k} \prod_{z \in \Sigma^k_-} \psi(\sigma_z(x))\psi(\sigma_z(x) + \sigma_z(y))\psi(\sigma_z(x)+ 2\sigma_z(y)).\]
From \eqref{3ap-orthog} it follows that 
\[ S_k(d) = \E_{x \in \Omega_{k}} 1_{x \in \tilde \Omega_k} \prod_{z \in \Sigma^k_-: \sigma_z(y) \equiv 0 \mdsub{3}} \psi(\sigma_z(x)).\]
Now both $y$ and $z$ here are vectors with only two nonzero entries and so $\sigma_z(y)$ takes only the values $0,1,2$ with $\sigma_z(y) = 0$ iff $y = z$. Therefore
\begin{equation}\label{two-averages} S_k(d) = \left\{ \begin{array}{ll} \E_{x \in \Omega_{k}} 1_{x \in \tilde\Omega_k}  \psi(\sigma_y(x)) & \mbox{if $y \in \Sigma^k_-$}\\  \E_{x \in \Omega_{k}} 1_{x \in \tilde\Omega_{k}} &  \mbox{if $y \in \Sigma^k_+$.}\end{array}  \right.\end{equation}
The second expression is
\[ \E_{x \in \Omega_{k}} 1_{x \in \tilde\Omega_{k}}  = \frac{|\tilde \Omega_k|}{|\Omega_k|} = \prod_{i=1}^k (1 - \frac{3}{M_i}) \rightarrow \gamma\] as $k \rightarrow \infty$. The first expression in \eqref{two-averages} may be written explicitly as 
\begin{equation}\label{av-3} \frac{|\tilde \Omega_k|}{|\Omega_k|}\E_{x \in \tilde\Omega_k}  \psi(x_1 + \cdots + \hat{x}_i + \cdots + \hat{x}_j + \cdots + x_k),\end{equation}
where $y$ has nonzero coordinates at $i,j$ and the hat means that $\hat{x}_i$ does not appear in the sum. Note, however, that $\tilde\Omega_k$ is a box with sidelengths $M_i - 3$, each of which is a multiple of $3$. Therefore $x_1 + \cdots + \hat{x}_i + \cdots + \hat{x}_j + \cdots + x_k$ is uniformly distributed mod $3$, as $x$ ranges uniformly over~$\tilde\Omega_k$, and the average in \eqref{av-3} is 
\[ \frac{|\tilde \Omega_k|}{|\Omega_k|} \cdot (-\frac{1}{3}) = -\frac{1}{3} \prod_{i=1}^k (1 - \frac{3}{M_i})  \rightarrow -\frac{\gamma}{3}.\] 
This completes the proof.
\end{proof}

\section{Putting everything together}

Our final task is to build a measure-preserving system from the functions constructed in the last section. For this we will need a slight variant of the usual Furstenberg correspondence principle, proven in a very similar way. An essentially equivalent statement may be found, for instance, in \cite[Proposition 3.3]{nikos-correspondence}.

\begin{proposition}\label{prop1.1}
Let $A \subset \R$ be a finite set. Suppose that for each $k \in \N$ we have functions $f_{0,k}, \cdots , f_{r,k} : \Z_{\geq 0} \rightarrow A$, and that $(N_k)_{k=1}^{\infty}$ is an increasing sequence of positive integers. Then there is a measure-preserving system $(X, \mathscr{B}, \mu, T)$ and functions $F_0,F_1,\dots, F_r \in L^{\infty}(\mu)$ such that the following is true: if $(d_1,\cdots, d_r)$ is a tuple of distinct positive integers such that 
\[ S(d_1,\dots, d_r) := \lim_{k \rightarrow \infty} \frac{1}{N_k} \sum_{n \in [0,N_k-1]} f_{0,k}(n) f_{1,k}(n + d_1)\cdots f_{r,k}(n + d_r)\] exists, then
\[ S(d_1, \dots, d_r) = \int_X F_0 \cdot T^{d_1} F_1 \cdot T^{d_2} F_2 \cdots T^{d_r} F_ r d\mu.\]
\end{proposition}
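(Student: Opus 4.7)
The plan is to realize the system concretely as a shift system on the product space $X = (A^{r+1})^{\Z}$ with the product topology (compact and metrizable, since $A$ is finite) and the left shift $T$ given by $(Tx)_n = x_{n+1}$. Define the coordinate projections $F_j : X \to \R$ by $F_j(x) = x_0^{(j)}$, where we write each $x_n \in A^{r+1}$ as $(x_n^{(0)}, \dots, x_n^{(r)})$. Each $F_j$ is continuous and uniformly bounded.

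For each $k$, package the data $(f_{0,k}, \dots, f_{r,k})$ into a single point $\omega_k \in X$ by setting $\omega_k(n) := (f_{0,k}(n), \dots, f_{r,k}(n))$ for $n \geq 0$ and extending arbitrarily for $n < 0$. Form the empirical measure
\[ \mu_k := \frac{1}{N_k} \sum_{n=0}^{N_k - 1} \delta_{T^n \omega_k}. \]
Unwinding the definitions, $T^{d_i} F_i$ evaluated at $T^n \omega_k$ is $\omega_k(n+d_i)^{(i)} = f_{i,k}(n+d_i)$, so
\[ \int_X F_0 \cdot (T^{d_1} F_1) \cdots (T^{d_r} F_r) \, d\mu_k = \frac{1}{N_k} \sum_{n=0}^{N_k - 1} f_{0,k}(n) f_{1,k}(n+d_1) \cdots f_{r,k}(n+d_r). \]

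Since the space of Borel probability measures on the compact metrizable space $X$ is itself compact metrizable in the weak-$*$ topology, pass to a subsequence $(k_j)$ along which $\mu_{k_j}$ weak-$*$ converges to some probability measure $\mu$. A standard telescoping bound, $|\int f \, d\mu_k - \int f \circ T \, d\mu_k| \leq 2\Vert f \Vert_{\infty} / N_k$ for every $f \in C(X)$, combined with $N_k \to \infty$, forces $\mu$ to be $T$-invariant. Each integrand $F_0 \cdot (T^{d_1} F_1) \cdots (T^{d_r} F_r)$ depends on only finitely many coordinates and is therefore continuous on $X$, so weak-$*$ convergence gives
\[ \int_X F_0 \cdot (T^{d_1} F_1) \cdots (T^{d_r} F_r) \, d\mu = \lim_{j \to \infty} \frac{1}{N_{k_j}} \sum_{n=0}^{N_{k_j} - 1} f_{0,k_j}(n) f_{1,k_j}(n+d_1) \cdots f_{r,k_j}(n+d_r). \]
If the full limit $S(d_1, \dots, d_r)$ exists, its value along the subsequence $(k_j)$ agrees with the full limit, giving the claimed identity.

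No serious obstacle arises; the only point worth flagging is that a single subsequence $(k_j)$ must simultaneously witness the identity for every tuple $(d_1, \dots, d_r)$, but this is automatic since we extract the subsequence once and then test against the countable family of continuous test functions indexed by these tuples.
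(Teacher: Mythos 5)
Your proof is correct and is precisely the standard argument behind this variant of the Furstenberg correspondence principle: empirical measures along the orbit of a point of the product shift space $(A^{r+1})^{\Z}$ encoding all $r+1$ functions at once, weak-$*$ compactness, the telescoping bound for invariance, and continuity of the finitely-supported test functions. The paper does not write out a proof, instead citing \cite[Proposition 3.3]{nikos-correspondence} and noting it is proved "in a very similar way" to the usual correspondence principle; your write-up supplies exactly that argument, with the one subtlety (a single subsequence serving all tuples $(d_1,\dots,d_r)$ simultaneously) correctly handled.
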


We will apply this with the functions constructed in the last section, taking $r = 2$, $f_{0,k} := \tilde f_k$, $f_{1,k} = f_{2,k} = f_k$, and $N_k = M_1 \cdots M_k$ as before.

By Proposition \ref{prop1.1} and Lemma \ref{limit-cor}, there is a measure-preserving system $(X,\mathscr{B}, \mu, T)$ together with functions $F_0, F_1, F_2 \in L^{\infty}(\mu)$ such that, writing $C_{F_0, F_1, F_2}(d) := \int_X F_0 \cdot T^d F_1 \cdot T^{2d} F_2 d\mu$, we have 

\begin{equation}\label{c-prop} C_{F_0, F_1,F_2}(d) = \eta(d) \quad \mbox{for $d \in \sr$}.\end{equation}
(Note it is clearly possible to scale the $F_i$ to remove $\gamma$ factor appearing in Lemma \ref{limit-cor}.) We claim that it is impossible to write
\[ C_{F_0, F_1, F_2}(n) = a(n) + b(n)\] with $a$ an integral combination of $2$-step nilsequences and $\Vert b \Vert_{\infty} \leq \frac{1}{100}$. 
Suppose that this were possible. Then, from \eqref{c-prop} and the fact that $\eta$ takes values in $\{1, -\frac{1}{3}\}$, we would have $(a(d) + b(d)) \eta(d) \in \{\frac{1}{9}, 1\}$ for all $d \in \sr$. However, $|b(d) \eta(d)| \leq \frac{1}{100}$, and therefore

\begin{equation}\label{real} \Re (a(d) \eta(d) ) \geq \frac{1}{9} - \frac{1}{100} > \frac{1}{10}\end{equation} for all $d \in \sr$.

Suppose that

\[ a(n) = \int_M a_m(n) d\sigma(m).\] Here, $M$ is a compact metric space, $\sigma$ is a complex Borel measure of bounded variation  and the $a_m$ are nilsequences, with the map $m \mapsto a_m(n)$ being in $L^{\infty}(\sigma)$. 

 Then \eqref{real} implies that
\[ \big| \frac{1}{|\sr[N]|} \sum_{n \in \sr[N]} a(n) \eta(n) \big| \geq \frac{1}{10}.\]
On the other hand we have
\[
\big| \frac{1}{|\sr[N]|} \sum_{n \in \sr[N]} a(n) \eta(n) \big|  \leq \int_M \big| \frac{1}{|\sr[N]|} \sum_{n \in \sr[N]} a_m(n) \eta(n) \big| d|\sigma|
\] However, by the choice of $\eta$ (Lemma \ref{lem0.1}) we have
\[ \lim_{N \rightarrow \infty} \frac{1}{|\sr[N]|} \sum_{n \in \sr[N]} a_m(n) \eta(n) = 0\] for all $m$. Therefore, by the dominated convergence theorem,
\[ \lim_{N \rightarrow \infty} \int_M \big| \frac{1}{|\sr[N]|} \sum_{n \in \sr[N]} a_m(n) \eta(n) \big| d|\sigma| = 0.\]

Putting these statements together gives a contradiction, and this completes the proof of Theorem \ref{mainthm}.

\appendix

\section{Generalised nilsequences}\label{app-A}

In this appendix we explain why our example does not seem to give a negative solution to \cite[Problem 1]{nikos-open-problems}. That is, we explain why our example (or similar ones) do not seem to be able to rule out the possibility that $C_{F_0, F_1, F_2}(n)$ is an approximate integral combination of \emph{generalised} $2$-step nilsequences, in which the automorphic function $\phi$ is allowed to be merely Riemann-integrable. In fact, our examples agree with $1$-step generalised nilsequences on the crucial set $\sr$.

Recall that $\sr = \mathscr{A} \hat{+} \mathscr{A}$, where
\[ \mathscr{A} = \{ N_0, N_1, N_2,\dots\} \quad \mbox{and} \quad  N_i := \prod_{j \leq i} M_j\] (thus $N_0 = 1$, $N_1 = M_1$, $N_2 = M_1M_2$ and so on). Here, $\mathscr{A} \hat{+} \mathscr{A}$ means the restricted sumset of $\mathscr{A}$ with itself, that is to say the set of sums of two distinct elements of $\mathscr{A}$.

\begin{proposition}
There is $\theta \in \R/\Z$ such that the following is true. Let $\eta : \sr \rightarrow [-1,1]$ be any function. Then there is a Riemann-integrable function $\phi : \R/\Z \rightarrow [-1,1]$ such that $\phi(\theta n) = \eta(n)$ for all $n \in \sr$.
\end{proposition}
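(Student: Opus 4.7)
The plan is to pick $\theta \in \R/\Z$ so that (i) $\theta N_k \to 0$ in $\R/\Z$, and (ii) the map $n \mapsto \theta n$ is injective on $\sr$. Given such a $\theta$, I set $\phi(\theta n) := \eta(n)$ for $n \in \sr$ and $\phi := 0$ elsewhere on $\R/\Z$. By (ii) this is well-defined, $|\phi| \leq 1$ is immediate, and off the closure $\overline{\mathcal{O}}$ of the orbit $\mathcal{O} := \{\theta n : n \in \sr\}$ the function $\phi$ vanishes on a neighbourhood, so the set of discontinuities of $\phi$ lies in $\overline{\mathcal{O}}$. It then suffices to show $\overline{\mathcal{O}}$ is countable, hence Lebesgue-null, which forces $\phi$ to be Riemann-integrable.

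I would obtain (i) from a Liouville-type construction. Consider the uncountable family
\[ \theta_\alpha := \sum_{k \geq 1} \frac{1+\alpha_k}{N_k}, \qquad \alpha \in \{0,1\}^{\N}. \]
Because $N_k \mid N_K$ for $k \leq K$, those terms contribute integers and drop out mod $1$, leaving the tail $\sum_{k > K}(1+\alpha_k)/(M_{K+1}\cdots M_k) = O(1/M_{K+1})$. Hypothesis (3) on the $M_i$ forces $M_i \to \infty$, so $\theta_\alpha N_K \to 0$ in $\R/\Z$. With $\theta = \theta_\alpha$, any convergent subsequence $\theta(N_{i_n}+N_{j_n}) \in \mathcal{O}$ with $i_n < j_n$ either has $i_n \to \infty$---forcing both $\theta N_{i_n}, \theta N_{j_n} \to 0$ and limit $0$---or, after passing to a subsubsequence, has $i_n = i_0$ constant and $j_n \to \infty$, giving limit $\theta N_{i_0}$. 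Consequently
\[ \overline{\mathcal{O}} \subseteq \mathcal{O} \cup \{\theta N_i : i \geq 0\} \cup \{0\}, \]
which is countable.

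Property (ii) amounts to the requirement that $\theta m \notin \Z$ for every nonzero $m$ in the countable difference set $\sr - \sr$; each such constraint excludes only a countable set of $\theta$'s, so only countably many $\theta_\alpha$ can fail (ii). The main---though fairly mild---obstacle is coordinating (i) and (ii) for a single $\theta$, and the uncountable reservoir $\{\theta_\alpha\}$ resolves this cleanly: pick any $\alpha$ outside the countable bad set. This yields a valid $\theta$, completing the construction.
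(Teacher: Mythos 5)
Your construction is correct, and the skeleton (the choice of $\theta$ as a Liouville-type series in the $1/N_k$, and the definition of $\phi$ as $\eta$ transported to the orbit $\theta\sr$ and $0$ elsewhere) coincides with the paper's. Where you genuinely diverge is in the two verification steps. For well-definedness, the paper simply proves that the single number $\theta = \sum_k 1/N_k$ is irrational (by the standard adaptation of the proof that $e$ is irrational), which gives injectivity of $n \mapsto \theta n$ on all of $\Z$; your cardinality argument over the uncountable family $\{\theta_\alpha\}$ also works (the map $\alpha \mapsto \theta_\alpha$ is injective since the tail beyond the first disagreement is strictly smaller than $1/N_{k_0}$, and all values lie in $(0,1)$), but it is heavier machinery for the same payoff. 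For Riemann integrability, the paper exhibits an explicit finite cover of $\theta\sr$ by intervals of total length $< \eps$, which requires the superlinear growth $\limsup_j M_j/j = \infty$ extracted from condition (3) on the $M_i$; you instead show that $\overline{\theta\sr}$ is countable (limits of sequences $\theta(N_{i_n}+N_{j_n})$ with distinct index pairs are $0$ or some $\theta N_{i_0}$, using only $\theta N_K \to 0$) and invoke Lebesgue's criterion. Your route is arguably cleaner here: it needs only $M_i \to \infty$, which already follows from the $M_i$ being strictly increasing, and dispenses with the quantitative covering estimate, at the cost of appealing to Lebesgue's characterisation of Riemann integrability rather than working directly from the definition of content zero.
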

\begin{proof} Set $\theta := \sum_{i=1}^{\infty} \frac{1}{N_i}$. Since $M_1 < M_2 < \cdots$, we certainly have $M_j \geq j$. As a consequence, the usual proof that $e$ is irrational may be adapted easily to show that $\theta$ is irrational: if $\theta = \frac{p}{q}$ then $\alpha := \frac{M_1 \cdots M_q p}{q} \in \frac{1}{q}\Z$, but on the other hand the fractional part of $\alpha$ satisfies
\[ 0 < \{\alpha\} = \frac{1}{M_{q+1}} + \frac{1}{M_{q+1}M_{q+2}} + \cdots \leq \frac{1}{q+1} + \frac{1}{(q+1)(q+2)} + \dots < \frac{1}{q}.\]

Now define $\phi : \R/\Z\to [-1,1]$ as follows: $\phi(\theta n) = \eta(n)$ for all $n \in \sr$, and $\phi(x) = 0$ if $x \notin \theta \sr$. Since $\theta$ is irrational, this is a well-defined function. 

We claim that it is Riemann-integrable, with integral zero. It is enough to show that for every $\eps > 0$ there is some finite collection of intervals, of total length $< \eps$, whose union covers $\theta \sr$.

Note that for every $j$ we have
\begin{equation}\label{thet-approx} \Vert \theta N_j \Vert_{\R/\Z} = \frac{1}{M_{j+1}} + \frac{1}{M_{j+1}M_{j+2}} + \dots < \frac{1}{M_{j+1} - 1}.\end{equation}

Moreover, condition (3) in the definition of the $M_j$s implies that 
\begin{equation}\label{eq4} \limsup_{j \rightarrow \infty} \frac{M_j}{j} = \infty.\end{equation}
In particular we may choose $k$ so that  $\frac{1}{M_{k+1} - 1} < \frac{\eps}{10k}$, and by \eqref{thet-approx} it follows that 
\[ \Vert \theta N_j \Vert_{\R/\Z} < \frac{\eps}{10k} \quad \mbox{for $j \geq k$}.\] It follows that 
\[ \theta \mathscr{A} \subseteq \{ \theta N_0,\dots, \theta N_{k-1}\} \cup I,\] where $I = (-\eps/10k, \eps/10k) \subseteq \R/\Z$.
Therefore
\[ \theta \sr \subseteq \theta \mathscr{A} + \theta \mathscr{A} \subseteq \bigcup_{i,j < k} \{ \theta(N_i + N_j) \} \cup \bigcup_{i < k}( \theta N_i + I) \cup (I + I),\]
which makes it clear that $\theta \sr$ is contained in a finite union of intervals of length $< \eps$. 
\end{proof}

\end{document}